\documentclass[a4paper,10pt]{article}
\usepackage{amsmath}
\usepackage{amsfonts}
\usepackage{amssymb}
\usepackage{amscd}
\usepackage{pb-diagram}
\usepackage{color}
\usepackage[all]{xy}
\usepackage{graphicx}
\usepackage{url}
\usepackage{color}
\usepackage{amsthm}
\setlength{\oddsidemargin}{0pt}         
\setlength{\evensidemargin}{9pt}        
\setlength{\textwidth}{460pt}   

\author{Mathieu Molitor
\\ \it\small{Chaire des structures alg\'ebriques et g\'eom\'etriques}
\\ \it\small{Facult\'e des sciences de base -- Institut de math\'ematiques B}
\\ \it\small{Ecole Polytechnique F\'ed\'erale de Lausanne}
\\ \small{\it{e-mail:}}\,\,\url{pergame.mathieu@gmail.com}
}
\title{Generalization of Hasimoto's transformation}

\date{}

\begin{document}
\newtheorem{lemma}{Lemma}[section]
\newtheorem{definition}[lemma]{Definition}
\newtheorem{proposition}[lemma]{Proposition}
\newtheorem{corollaire}[lemma]{Corollaire}
\newtheorem{theoreme}[lemma]{Theorem}
\newtheorem{remark}[lemma]{Remark}

\maketitle

\begin{abstract}
In this paper, we generalize the famous Hasimoto's transformation by showing that the dynamics of a closed unidimensional vortex filament embedded in a three-dimensional manifold $M$ of constant curvature gives rise under Hasimoto's transformation to the non-linear Schr\"odinger equation.

We also give a natural interpretation of the function $\psi$ introduced by Hasimoto in terms of moving frames associated to a natural complex bundle over the filament.
\end{abstract}

\section*{Introduction}	
The classical vortex filament equation describes the dynamics of a time-dependant ``filament" $\alpha_{t}=\alpha\in Emb(S^{1},\mathbb{R}^{3})$ ($Emb(S^{1},\mathbb{R}^{3})$ being the space of smooth embbedings) and is given by 
\begin{equation}
\dfrac{d\alpha}{dt}=\kappa\cdot B\,,\label{VFE}
\end{equation}
where $\kappa\in C^{\infty}(S^{1},\mathbb{R}^{3})$ and $B$ are respectively the curvature and the binormal of $\alpha$ (see for example \cite{Berger})\,. 
The terminology comes from fluid mechanics; a filament has to be thought of as the very heart of a vortex or a whirlwind, i.e.,  a region of $\mathbb{R}^{3}$ where all the ``vorticity" is concentrated. Its dynamics are actually derived from the so-called ``LIA" approximation (localized induction approximation, see \cite{Kambe}) and it can be shown that Eq. \eqref{VFE} is equivalent to the ``equation of gas dynamics" as well as the``Heisenberg magnetic chain equation" (see \cite{Arnold-Khesin}). See also \cite{Ricca} for a very nice historical survey of the vortex filament equation.\\
Probably one of the most striking features of the vortex filament equation is given by the following. Hasimoto noticed in \cite{Hasimoto}, that the function $\Psi\,:\,S^{1}\rightarrow \mathbb{R}^{3}$ which is defined -- up to a phase factor -- by $\psi(s):=\kappa(s)\cdot e^{i\int_{0}^{s}\,\tau(x)dx}\,,$ where $\kappa$ and $\tau$ are respectively the curvature and the torsion of a filament $\alpha$ solution of Eq. \eqref{VFE}, satifies the non-linear Schr\"odinger equation :
\begin{equation}
-i\,\dfrac{\partial\,\Psi}{\partial t}=\dfrac{\partial^{2}\,\Psi}{\partial s^{2}}+\dfrac{1}{2}\,\vert\Psi\vert^{2}\cdot\Psi\,.
\end{equation} 
This is a remarkable observation  which gave rise to numerous papers mostly on integrable systems since the non-linear Schr\"odinger equation is well known to be a completely integrable system (see \cite{Langer-Perline,Zakharov-Shabat}).\\
The purpose of this paper is twofold and the discussion is divided into two sections. In the first one, an interpretation of the somehow puzzling function $\psi$ introduced without justifications by Hasimoto in \cite{Hasimoto}, is given. More precisely, it is shown that the function $\psi$ can be seen as the infinitesimal rotation of a natural moving frame associated to a natural complex bundle over the filament. In the second section, it is shown that Hasimoto's observation still ``holds" for a closed unidimensional filament embedded in a three-dimensional oriented Riemannian manifold of constant curvature, generalizing the case of $\mathbb{R}^{3}\,.$
\section{Interpretation of the Function $\psi$}
Let $E\overset{\pi^{E}}{\longrightarrow} M$ be a $\mathbb{K}$-vector bundle over a manifold $M$ ($\mathbb{K}=\mathbb{R}$ or $\mathbb{C}$) and let $h^{E}$ be, according to $\mathbb{K},$ an Euclidean or Hermitian structure on $E.$ 
Recall that the orthonormal frame bundle $\mathcal{F}E$ associated to the vector bundle $E$ is defined as the disjoint union $\underset{x\in M}{\cup}\mathcal{F}E_{x}$ where $\mathcal{F}E_{x}$ is the set of all orthonormal frames for the fiber $(E_{x},h^{E}\,)\,.$ It is well known that $\mathcal{F}E$ is a $G$-principal bundle over $M$ with structure group $G$ equal to $O(k)$ if $\mathbb{K=R}$ or $U(k)$ if $\mathbb{K}=\mathbb{C}$ where $\text{dim}_{\mathbb{K}}(E_{x})=k$ for all $x\in M\,.$ 
We shall write $G\hookrightarrow \mathcal{F}E \overset{\pi^{\mathcal{F}E}}{\longrightarrow} M\,.$\\
 $\text{}$\\
Let us give the following useful lemma which describes tangent vectors on $\mathcal{F}E\,.$ 
\begin{lemma}\label{lemma 1}\label{lemme vecteur tangents sur fibre des reperes}
Let $\nabla^{E}$ be a connection on the vector bundle $E$ compatible with $h^{E}$ and let $A$ and $\widetilde{A}$ be two smooth curves of $\mathcal{F}E$ such that $A(t_{0})=\widetilde{A}(t_{0})\,.$ Then we have the following equivalence :
\begin{eqnarray}
&&\dfrac{d}{dt}\bigg\vert_{t_{0}}A(t)=\dfrac{d}{dt}\bigg\vert_{t_{0}}\widetilde{A}(t)
\Leftrightarrow
\left\lbrace
\begin{array}{ccc}
\dot{\alpha}(t_{0})=\dot{(\widetilde{\alpha})}(t_{0})\nonumber\\
\text{and}\\
\text{}\,\,\,\,\,\nabla^{E}_{\dot{\alpha}(t_{0})}A^{j}(t)=\nabla^{E}_{\dot{(\widetilde{\alpha})}(t_{0})}(\widetilde{A})^{j}(t)\nonumber\\
\text{for all}\,\,\,j\in\{1,...,k\}\,,
\end{array}
\right.
 \end{eqnarray}
where $\alpha(t):=\pi^{\mathcal{F}E}(A(t))$ and where we write $A(t)=\{A^{1}(t),...,A^{k}(t)\}\,,$ $A^{i}(t)$ being an element of $E_{\alpha(t)}$ for all $i\in \{1,...,k\}\,.$
\end{lemma}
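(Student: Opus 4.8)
The plan is to reduce everything to a local computation in a trivialization of the principal bundle $G\hookrightarrow \mathcal{F}E \to M$. First I would choose, around the common base point $x_{0}=\alpha(t_{0})=\widetilde{\alpha}(t_{0})$, a trivializing neighbourhood $U$ together with a local orthonormal frame field $e_{1},\dots,e_{k}$ of $(E,h^{E})$ over $U$; this identifies $(\pi^{\mathcal{F}E})^{-1}(U)$ with $U\times G$. Under this identification a curve $A(t)$ in $\mathcal{F}E$ is described by a pair $(\alpha(t),g(t))$, where $g(t)=(g^{j}_{i}(t))\in G$ is determined by $A^{i}(t)=\sum_{j}g^{j}_{i}(t)\,e_{j}(\alpha(t))$, and similarly $\widetilde{A}(t)\mapsto(\widetilde{\alpha}(t),\widetilde{g}(t))$. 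The hypothesis $A(t_{0})=\widetilde{A}(t_{0})$ then reads $\alpha(t_{0})=\widetilde{\alpha}(t_{0})$ and $g(t_{0})=\widetilde{g}(t_{0})$.

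The decisive observation is the standard fact that two curves in a manifold have the same velocity at $t_{0}$ if and only if their coordinate representations have the same derivative there. Applied in the chart $U\times G$, this gives
\[
\frac{d}{dt}\Big|_{t_{0}}A(t)=\frac{d}{dt}\Big|_{t_{0}}\widetilde{A}(t)
\Longleftrightarrow
\dot\alpha(t_{0})=\dot{\widetilde{\alpha}}(t_{0})\ \text{and}\ \dot g(t_{0})=\dot{\widetilde{g}}(t_{0}).
\]
Thus the whole statement follows once I show that, under the assumption $\dot\alpha(t_{0})=\dot{\widetilde{\alpha}}(t_{0})$ (together with $g(t_{0})=\widetilde{g}(t_{0})$), the equality $\dot g(t_{0})=\dot{\widetilde{g}}(t_{0})$ is equivalent to the equality of all the covariant derivatives $\nabla^{E}_{\dot\alpha(t_{0})}A^{j}=\nabla^{E}_{\dot{\widetilde{\alpha}}(t_{0})}(\widetilde{A})^{j}$.

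For this I would compute the covariant derivative of a frame vector along the curve. Writing the connection matrix of $\nabla^{E}$ in the frame $e_{1},\dots,e_{k}$ as $\omega=(\omega^{l}_{j})$, so that $\nabla^{E}e_{j}=\sum_{l}\omega^{l}_{j}\,e_{l}$, the Leibniz rule yields
\[
\nabla^{E}_{\dot\alpha(t_{0})}A^{i}
=\sum_{l}\Big(\dot g^{l}_{i}(t_{0})+\sum_{j}\omega^{l}_{j}(\dot\alpha(t_{0}))\,g^{j}_{i}(t_{0})\Big)\,e_{l}(x_{0}),
\]
and the analogous formula for $(\widetilde{A})^{i}$. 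Since $\dot\alpha(t_{0})=\dot{\widetilde{\alpha}}(t_{0})$, the matrix $\Omega:=(\omega^{l}_{j}(\dot\alpha(t_{0})))$ is common to both curves, and since $g(t_{0})=\widetilde{g}(t_{0})$ the inhomogeneous terms $\sum_{j}\Omega^{l}_{j}\,g^{j}_{i}(t_{0})$ coincide as well. Comparing the $e_{l}(x_{0})$-coordinates, the map $\dot g(t_{0})\mapsto\big(\nabla^{E}_{\dot\alpha(t_{0})}A^{i}\big)_{i}$ is then an affine bijection, namely a translation by the fixed matrix $\Omega\,g(t_{0})$; hence the covariant derivatives agree for all $i$ precisely when $\dot g(t_{0})=\dot{\widetilde{g}}(t_{0})$. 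Combined with the two coordinate equivalences above, this establishes the claimed equivalence.

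I expect the only genuine care to be bookkeeping rather than a real obstacle: one must keep the hypothesis $\dot\alpha(t_{0})=\dot{\widetilde{\alpha}}(t_{0})$ in play so that the connection term $\Omega$ is the \emph{same} for both curves (this is exactly why the first condition on the right-hand side cannot be dropped), and one must note that compatibility of $\nabla^{E}$ with $h^{E}$ is what guarantees $\Omega$ lies in the Lie algebra $\mathfrak{g}$ of $G$, so that $\dot g(t_{0})+\Omega\,g(t_{0})$ remains tangent to $G$ and the whole computation takes place inside $\mathcal{F}E$. Well-definedness, that is, independence of the chosen local frame $e_{1},\dots,e_{k}$, is automatic, since both sides of the asserted equivalence are intrinsic, so verifying it in a single trivialization suffices.
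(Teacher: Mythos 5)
Your proof is correct and takes essentially the same approach as the paper: the paper's entire proof is the remark that the lemma ``can be proved easily using local charts'' (citing Naber, Section 3.3), and your local-trivialization computation with the connection matrix $\omega$ is exactly that argument carried out in full. One inessential quibble: the equivalence itself does not actually need compatibility of $\nabla^{E}$ with $h^{E}$, since the key injectivity of the translation $\dot g(t_{0})\mapsto \dot g(t_{0})+\Omega\,g(t_{0})$ holds whether or not $\Omega$ lies in $\mathfrak{g}$.
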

\begin{proof} 
	Lemma \ref{lemma 1} can be proved easily using local charts, as given for example in \cite{Naber}, Section 3.3.
\end{proof}
Using Lemma \ref{lemma 1}, it is easy to define a connection form $\theta^{E}\in \Omega^{1}(\mathcal{F}E,\mathfrak{g})$ (here $\mathfrak{g}$ stands for $\mathfrak{o}(k)$ or $\mathfrak{u}(k)$) on the principal bundle $G\hookrightarrow \mathcal{F}E \overset{\pi^{\mathcal{F}E}}{\longrightarrow} M$ as follows :
\begin{equation}\label{definition connection de ehresmann}
\theta^{E}_{A(0)}\Big(\dfrac{d}{dt}\bigg\vert_{0}\,A(t)\Big):=\Big(h^{M}_{\alpha(0)}\big(\nabla_{\overset{\cdot}{\alpha}(0)}A^{j}(t),A^{i}(t)\big)\Big)_{1\leq i,j\leq k}\,,
\end{equation}
where $A(t)=\{A^{1}(t),...,A^{k}(t)\}$ is a smooth curve of $\mathcal{F}E$ and where $\alpha(t):=\pi^{\mathcal{F}E}\big(A(t)\big)\,.$
The connection $\theta^{E}$ is the \textbf{Ehresmann connection} and is well known in Riemannian geometry (see \cite{Kobayashi-Nomizu,Kobayashi-Nomizu-2,Petersen,Spivak2}...). In a certain sense, this connection measures the infinitesimal rotation of a moving frame over $M\,.$ \\\\
Let us now consider a filament $\Sigma:=\alpha(S^{1})$ embedded in a three-dimensional oriented Riemannian manifold $(M,h^{M})$ (here $\alpha\,:\,S^{1}\rightarrow M$ is an embedding). Assume furthermore that for all $s\in S^{1},$ $(Trace\,\Pi_{\Sigma})(\alpha(s))\neq 0$ where $Trace\,\Pi_{\Sigma}$ denotes the trace of the second fundamental form $\Pi_{\Sigma}$ of the submanifold $\Sigma\subseteq M\,.$ This extra assumption allows us to define at any point $\alpha(s)$ of the filament $\Sigma\,,$ its associated Frenet frame :
\begin{eqnarray}
\alpha^{Fr}(s):=(T,N,B)\in \big(\mathcal{F}TM\vert_{\Sigma}\big)_{\alpha(s)}\,\,\,\text{where}\,\,\,
\left\lbrace
\begin{array}{ccc}
T&:=&\dfrac{\overset{\cdot}{\alpha}(s)}{\Vert \overset{\cdot}{\alpha}(s)\Vert}\,,\\
N&:=&\dfrac{\big(Trace\,\Pi_{\Sigma}\big)(\alpha(s))}{\Vert\big(Trace\,\Pi_{\Sigma}\big)(\alpha(s)) \Vert}\,,\\
B&:=&T\times N\,.
\end{array}
\right.\nonumber
\end{eqnarray}
Here $TM\vert_{\Sigma}:=j^{*}_{\Sigma}TM$ with $j_{\Sigma}\,:\,\Sigma\hookrightarrow M$ being the canonical inclusion. We thus get a smooth curve $\alpha^{Fr}\,:\,S^{1}\rightarrow \mathcal{F}TM\vert_{\Sigma}$ which gives rise to another smooth curve $\alpha^{\mathfrak{o}(3)}\,:\,S^{1}\rightarrow \mathfrak{o}(3)$ via the formula :
\begin{eqnarray}
\alpha^{\mathfrak{o}(3)}(s_{0}):=\theta^{TM}_{\alpha^{Fr}(s_{0})}\Big(\dfrac{d}{ds}\bigg\vert_{s_{0}}\,\alpha^{Fr}(s)\Big)\,,
\end{eqnarray}
for $s_{0}\in S^{1}\,.$ In particular, if the filament $\alpha$ is parameterized by arclength, then it is easy to see using Eq. \eqref{definition connection de ehresmann} that $\alpha^{\mathfrak{o}(3)}(s)$ takes the form 
\begin{eqnarray}\label{equation formules de frnet}
\alpha^{\mathfrak{o}(3)}(s)=
\begin{pmatrix}
0&-\kappa(s)&0\\
\kappa(s)&0&-\tau(s)\\
0&\tau(s)&0
\end{pmatrix}
\end{eqnarray}
where $\kappa,\tau\,:\,S^{1}\rightarrow \mathbb{R}$ are two smooth functions on $S^{1}\,.$ In the case where $M=\mathbb{R}^{3}$ endowed with the canonical metric and orientation, then Eq. $\eqref{equation formules de frnet}$ corresponds to the usual Frenet Formulas and $\kappa$ and $\tau$ are respectively the curvature and the torsion of $\Sigma$ (see \cite{Berger}, chapitre 8).\\\\
More generally, if we are given a $\mathbb{K}$-vector bundle over $\Sigma$ of rank $k$ with an Euclidean (or Hermitian) structure on it, a compatible connection and a moving frame\footnote{By ``moving frame", we mean a smooth curve, or more precisely a loop, of the associated orthonormal frame bundle.}, it is then possible to associate an element of $L\mathfrak{o}(k):=C^{\infty}(S^{1},\mathfrak{o}(k))$ (or $L\mathfrak{u}(k)$)\,.\\
In this spirit, the simplest vector bundle over $\Sigma$ which takes account of the geometry of the normal bundle $N\Sigma$ of $\Sigma$ in $M$ is surely given by the complex line bundle $E\overset{\pi^{E}}{\longrightarrow}\Sigma$ whose fiber at a point $\alpha(s)\in \Sigma$ is given by 
\begin{eqnarray}
E_{\alpha(s)}:=\text{Vect}_{\mathbb{C}}\{N+iB\}\subseteq T_{\alpha(s)}M^{\mathbb{C}}\,,
\end{eqnarray}
where the space $T_{\alpha(s)}M^{\mathbb{C}}$ denotes the complexification of $T_{\alpha(s)}M\,.$ Denoting $J^{\mathbb{C}}\,:\,(N\Sigma)_{\alpha(s)}^{\mathbb{C}}\rightarrow (N\Sigma)_{\alpha(s)}^{\mathbb{C}}$ the $\mathbb{C}-$extension of $J$ on the complexification $(N\Sigma)_{\alpha(s)}^{\mathbb{C}}$ of $(N\Sigma)_{\alpha(s)}\,,$ we note that $N+iB$ is an eigenvector of $J^{\mathbb{C}}$ with corresponding eigenvalue $-i\,.$ The metric $h^{M}$ and the associated Levi-Civita connection $\nabla$ naturally induce a Hermitian structure $h^{E}$ on $E$ and a compatible connection $\nabla^{E}\,.$ Thus, according to Lemma \ref{lemme vecteur tangents sur fibre des reperes}, we get a connection form $\theta^{E}\in \Omega^{1}(\mathcal{F}E,\mathfrak{u}(1))\,.$\\\\
Let us consider a smooth curve $A\,:\,S^{1}\rightarrow \mathcal{F}E$ of $\mathcal{F}E$ such that $\pi^{E}(A(s))=\alpha(s)$ for all $s\in S^{1}\,.$ The curve $A$ is necessarily of the form 
\begin{eqnarray}
A(s)=\bigg\{e^{i\rho(s)}\dfrac{N+iB}{\sqrt{2}}\bigg\}
\end{eqnarray}
for all $s\in S^{1},$ where $\rho\,:\,S^{1}\rightarrow \mathbb{R}$ is a smooth map. For $s_{0}\in S^{1}\,,$ an easy calculation shows that
\begin{eqnarray}\label{equation rotation instantannee pour un seul vecteur}
\theta^{E}_{A(s_{0})}\Big(\dfrac{d}{ds}\bigg\vert_{s_{0}}\,A(s)\Big)=\Big(i\big(\dot{\rho}(s_{0})-\tau(s_{0})\big)\Big)\in \mathfrak{u}(1)\,.
\end{eqnarray} 
In particular, the curve $A$ has zero infinitesimal rotation if and only if $\dot{\rho}-\tau=0\,,$ i.e. if $\rho(s) =\int_{0}^{s}\,\tau(x) dx$ (modulo an additive contant). \\\\
Now, in order to consider a vector bundle which also takes into account the ``tangential" geometry of the filament $\Sigma\,,$ it is natural to consider the complex vector bundle $F\overset{\pi^{F}}{\longrightarrow}\Sigma$ whose fiber at a point $\alpha(s)\in \Sigma$ is defined by 
\begin{eqnarray}
F_{\alpha(s)}:=\text{Vect}_{\mathbb{C}}\{T,N+iB\}\subseteq T_{\alpha(s)}M^{\mathbb{C}}\,.
\end{eqnarray}
Again, we get a Hermitian structure $h^{M}\,,$ a compatible connection $\nabla^{F}$ and a connection form $\theta^{F}\in \Omega^{1}(\mathcal{F}F,\mathfrak{u}(2))\,.$ In view of the above, let us denote by $B\,:\,S^{1}\rightarrow \mathcal{F}F$ the map defined as 
\begin{eqnarray}
B\big(\alpha(s)\big):=\bigg\{T,\big(e^{i\int_{0}^{s}\,\tau(x) dx}\big)\dfrac{N+iB}{\sqrt{2}}\bigg\}\,,
\end{eqnarray}
for $s\in S^{1}\,.$\\
Again, a simple calculation shows that 
\begin{eqnarray}
\theta^{F}_{B(s_{0})}\Big(\dfrac{d}{ds}\bigg\vert_{s_{0}}\,B(s)\Big)=\dfrac{1}{\sqrt{2}}\cdot
\begin{pmatrix}
0&-\psi\\
\overline{\psi}&0
\end{pmatrix}\in \mathfrak{u}(2)\,,
\end{eqnarray}
where $\psi$ is the famous Hasimoto's function, i.e. $\psi(s)=\kappa(s)\cdot e^{i\int_{0}^{s}\,\tau(x) dx}\,.$ Hence, the function $\psi$ of Hasimoto measures the infinitesimal rotation of the natural moving frame $B$ over 
$\Sigma\,.$
\section{Generalization of Hasimoto's Transformation for Manifolds of Constant Curvature}
Let $(M,g)$ be a three-dimensional oriented Riemannian manifold and $\alpha_{t}\,:\,S^{1}\rightarrow M$ a time-dependent embedding. For $t\in(-\epsilon, \epsilon)\,,$ we assume that
\begin{center}
\begin{description}
\item[$\bullet$] $(Trace\,\Pi_{\Sigma_{t}})\big(\alpha_{t}(s)\big)\neq 0\,,$ for all $s\in S^{1}$ and for all $t\in (-\varepsilon,\varepsilon)$ (here $\Sigma_{t}:=\alpha_{t}(S^{1})$)\,,\\
\item[$\bullet$] $\alpha_{t}$ is parameterized by arclength, i.e., $\Vert \frac{d}{ds}\,\alpha_{t}(s)\Vert=1$ for all $t\in (-\varepsilon,\varepsilon)$ and for all $s\in S^{1}\,,$\\
\item[$\bullet$] $\alpha_{t}$ is a solution of the vortex filament equation, i.e., $\frac{d}{dt}\,\alpha_{t}=\kappa\cdot B\,.$
\end{description}
\end{center}
\begin{remark}
The first condition above ensures that the Frenet frame associated to the curve $\alpha_{t}$ exists for all $t\in (-\epsilon, \epsilon)\,.$
\end{remark}
\begin{remark}
If $\alpha_{t}$ is a solution of the vortex filament equation which is parameterized by arclength for $t=0\,,$ then one can show that $\alpha_{t}$ is also parameterized by arclength for all $t\,.$ 
\end{remark}

We will now give the evolution's equation of the curvature $\kappa$ and the torsion $\tau$ of the curve $\alpha_{t}\,.$ Recall that the curvature and the torsion of a curve are defined by Eq. \eqref{equation formules de frnet}. Consequently, we have the Frenet formulas : 
\begin{eqnarray}\label{hummm}
\dfrac{D}{\partial s}T=\kappa\cdot N\,;\,\,\,\dfrac{D}{\partial s}N=-\kappa\cdot T+\tau\cdot B\,;\,\,\,\dfrac{D}{\partial s}B=-\tau\cdot N\,,
\end{eqnarray}
where $\frac{D}{\partial s}$ denotes the covariant derivative along the filament $\alpha_{t}\,.$ 
Using Eq. \eqref{hummm} and proceeding exactly as in \cite{Brylinski}, we find out that the evolution equations of $\kappa$ and $\tau$ are :
\begin{eqnarray}
\left\lbrace
\begin{array}{ccc}\label{evolution}
\underset{\text{}}{\dfrac{\partial\,\kappa}{\partial t}}&=&-2\tau\dfrac{\partial\,\kappa}{\partial s}-\kappa\,\dfrac{\partial\,\tau}{\partial s}+\kappa\cdot\big(T,B,T,N\big)\,,\\
\dfrac{\partial\,\tau}{\partial t}&=&\dfrac{\partial}{\partial s}\bigg(\dfrac{1}{\kappa}\dfrac{\partial^{2}\,\kappa}{\partial s^{2}}-\tau^{2}+\dfrac{1}{2}\kappa^{2}+\big(T,B,T,B\big)\bigg)-\kappa\cdot\big(B,T,N,B\big)\,,
\end{array}
\right.
\end{eqnarray}
where $(\,.\,,\,.\,,\,.\,,\,.\,):=h^{M}\big(R(\,.\,,\,.\,)\,.\,,\,.\,\big)\,,$ with $R\,:\,\mathcal{X}(M)\times\mathcal{X}(M)\times\mathcal{X}(M)\rightarrow \mathcal{X}(M)$ denoting the Riemannian curvature.  \\\\
Under the additional assumption that the curvature of $M$ is constant, then the curvature tensor of $M$ reduces to (see \cite{Do-carmo}, Lemma 3.4 page 96) :
\begin{eqnarray}
\big(X,Y,W,Z\big)=K_{0}\Big(h^{M}(X,W)\cdot h^{M}(Y,Z)-h^{M}(X,Z)\cdot h^{M}(Y,W)\Big)\,,
\end{eqnarray}
for $X,Y,W,Z\in \mathcal{X}(M)$ and a certain constant $K_{0}\,.$ In particular, we have $\big(T,B,T,B\big)=K_{0}$ and $\big(B,T,N,B\big)=\big(T,B,T,N\big)=0\,.$\\
In this context, i.e., assuming $\kappa$ and $\tau$ associated to a curve $\alpha_{t}$ solution of the vortex filament equation, let us consider 
$\psi_{t}\,:\,S^{1}\rightarrow\mathbb{C},\,s\mapsto \kappa\cdot e^{i\,\int_{0}^{s}\,\tau(x)dx}\,.$ Exactly as in the proof of Theorem 3.5.8 in \cite{Brylinski}, 
and using Eqs. \eqref{evolution}, a direct calculation shows that
\begin{eqnarray}
-i\,\dfrac{\partial\,\psi}{\partial t}=\dfrac{\partial^{2}\,\psi}{\partial s^{2}}+\dfrac{1}{2}\,\vert\psi\vert^{2}\cdot\psi-\psi\cdot A(t)\,,
\end{eqnarray}
where $A(t):=\big(\frac{1}{\kappa}\frac{\partial^{2}\,\kappa}{\partial s^{2}}-\tau^{2}+\frac{1}{2}\kappa^{2}\big)(s)\,\big\vert_{s=0}\,.$ Set $\Psi_{t}(s):=e^{i\int_{0}^{t}\,A(x)dx}\cdot \psi_{t}(s)$ for $t\in (-\varepsilon,\varepsilon)$ and $s\in S^{1}\,.$ Again,  a direct calculation shows that $\Psi_{t}$ satisfies the non-linear Schr\"odinger equation :
\begin{eqnarray}
{-i\,\dfrac{\partial\,\Psi}{\partial t}=\dfrac{\partial^{2}\,\Psi}{\partial s^{2}}+\dfrac{1}{2}\,\vert\Psi\vert^{2}\cdot\Psi}\,.
\end{eqnarray}
\section*{Acknowledgments} I would like to give special thanks to Tilmann Wurzbacher for his careful and critical readind of the ``french version" of this paper (i.e. the corresponding part of my thesis).\\
This work was done with the financial support of the Fonds National Suisse de la Recherche Scientifique under the grant PIO12--120974/1.

\end{document}